\numberwithin{equation}{section}
\theoremstyle{plain}
        \newtheorem{theorem}{Theorem}[section]
        \newtheorem{remark}[theorem]{Remark}  
        \newtheorem*{claim*}{Claim}  
\newtheorem*{theorem*}{Theorem}
\newtheorem*{definition*}{Definition}
\newtheorem*{proposition*}{Proposition}
\let\oldmarginpar\marginpar
\renewcommand\marginpar[1]{\-\oldmarginpar[\raggedleft\footnotesize #1]
{\raggedright\footnotesize #1}}
\newcommand \be {\begin{equation}}
\newcommand \ee {\end{equation}}
\newcommand \la \langle
\newcommand \ra \rangle
\renewcommand\div{\text{div\,}}
\newcommand{\R}{\mathbb{R}}
\renewcommand{\d}{\partial} 
\DeclareMathOperator{\di}{div}
\newcommand{\defeq}{\mathop{=}\limits^{\textrm{def}}}
\title[3D stationary Navier--Stokes]{Three-dimensional stationary incompressible inhomogeneous Navier--Stokes equations in the axially symmetric case}
\author{Zihui He}
\address[Z. He]
{Fakultat für Mathematik, Universität Bielefeld, Postfach 100131, 33501 Bielefeld, Germany.}
\email{zihui.he@uni-bielefeld.de}
\date{\today}
\begin{document}
\subjclass[2020]{35Q30, 76D05}
\keywords{Inhomogeneous incompressible Navier--Stokes equations, variable viscosity coefficient}
\maketitle
\begin{abstract}
We show the existence of (a class of) weak solutions to the 
three-dimensional stationary incompressible inhomogeneous Navier--Stokes equations with density-dependent viscosity coefficient in the axially symmetric case. Further symmetric solutions in cylindrical coordinates, spherical coordinates and   Cartesian coordinates are also discussed.    \end{abstract}

\section{Introduction}
The three-dimensional stationary inhomogeneous incompressible \linebreak Navier--Stokes equations are
\begin{equation}\label{SNS3}
\left\{
\begin{aligned}
&\di(\rho u\otimes u)-\di(\mu Su)+\nabla\Pi= f,\quad x\in\Omega\subset\R^3,\\
&\di u=0,\,\di(\rho u)=0.
\end{aligned}
\right.
\end{equation}
The velocity field $u:\Omega\to\R^3$, the density function $\rho:\Omega\to\R_+$ and the pressure $\Pi:\Omega\to\R$ are unknown.
The external force $f:\Omega\to\R^3$ is given. We write $\nabla u=(\d_ju_i)_{1\le i,j\le3}$, $Su=\nabla u+(\nabla u)^T$ and $\frac12 Su$ is the symmetric part of $\nabla u$. We denote  $v\otimes w=(v_iw_j)_{1\le i,j\le3}$ for vectors $v=(v_1,v_2,v_3)^T$ and $w=(w_1,w_2,w_3)^T$. 

The viscosity coefficient $\mu$ depends smoothly on the density function $\rho$,
\begin{equation*}
    \mu=b(\rho),\quad b\in C(\R_+;[\mu_*,\mu^*]) \quad \text{given},
\end{equation*}
where $\mu_*,\mu^*>0$ are lower and upper bounds.   Notice that if $\mu=\nu>0$ is a positive constant, then $\di(\mu Su)=\nu \Delta u$ since $\di u=0$. 

On a bounded domain $\Omega$, we consider the boundary value problem \eqref{SNS3} under the boundary condition 
\begin{equation}\label{u00:R3}
u|_{\d \Omega}=u_0
\end{equation}
satisfying the zero-flux condition
\begin{equation}\label{u0:R3}
 \int_{\partial\Omega}u_0\cdot \vec{n}\,ds=0.
\end{equation}

\textcite{Leray} showed the solvability of the classical stationary incompressible Navier--Stokes equations 
\begin{equation}
\label{CNS}
\left\{
\begin{aligned}
&\di(  u\otimes u)-\nu \Delta u+\nabla\Pi= f,\\
&\di u=0.
\end{aligned}
\right.
\end{equation}
on certain bounded, exterior domains or $\R^3$. There are some works devoted to the asymptotic behavior of Leray's solutions, see for example \cite{ Finn,AMI91}. We mention the celebrated book on 
 stationary fluid flows by \textcite{Galdi}.

However, to our knowledge, less is known about the stationary inhomogeneous Navier--Stokes equations \eqref{SNS3}. 
In the \textit{constant} viscosity coefficient case 
\begin{equation*}
\left\{
\begin{aligned}
&\di(\rho u\otimes u)-\nu\Delta u+\nabla\Pi= f,\\
&\di u=0,\,\di(\rho u)=0,
\end{aligned}
\right.
\end{equation*}
\textcite{Frolov} showed the existence of weak solutions with the form  \begin{equation*}
    (\rho,u)=(\eta(\Phi),\nabla^\perp\Phi),\quad  \nabla^\perp=\begin{pmatrix}
    \d_{x_2}\\-\d_{x_1}
    \end{pmatrix},
\end{equation*}
where $\Phi$ is the stream function of $u$ and $\eta$ is any given Hölder continuous function.
Under this assumption, the density equation holds immediately:
\begin{equation*}
    \di(\rho u)=\nabla \eta(\Phi)\cdot \nabla^\perp \Phi=0.
\end{equation*}
Later, \textcite{Santos1} generalised this existence result to the case that $\eta$ is only bounded. Concerning the \textit{density-dependent} viscosity coefficients, the author and Liao \cite{HL20} showed 
existence and regularity results for the equations \eqref{SNS3}.  We mention the celebrated book on the \textit{evolutionary}  incompressible inhomogeneous Navier--Stokes equations by \textcite{Lions}. To our best knowledge, there are no existence results for the three-dimensional \textit{stationary} inhomogeneous incompressible Navier--Stokes equations \eqref{SNS3}.


\subsection{Main result}\label{R3main}
We use the cylindrical coordinates $(r,z,\theta)\in[0,\infty)\times \R\times[0,2\pi)$ and write $e_r,e_z$ and $e_\theta$ for the orthogonal basis
    \begin{equation}
    \label{sc}
       e_r=\begin{pmatrix}
         \cos\theta\\\sin\theta\\0
       \end{pmatrix},\quad 
       e_z=\begin{pmatrix}
        0\\0\\1
\end{pmatrix}
,\quad 
  e_\theta=\begin{pmatrix}
       -\sin\theta\\\cos\theta\\0
       \end{pmatrix}.
\end{equation}
We consider the system \eqref{SNS3} on the axially symmetric simply connected domain
\begin{equation}\label{domian:R3}
\Omega=[0,r_1)\times(z_1,z_2)\times[0,2\pi), 
\end{equation}
where $0<r_1<+\infty$ and $-\infty<z_1<z_2<+\infty$.   
The velocity field 
\begin{equation*}
  u=u_re_r+u_\theta e_\theta+u_ze_z
\end{equation*}
is called {\it axially symmetric} if $u_r,u_\theta$ and $u_z$ are independent of $\theta$. We define the functional spaces for axially symmetric functions
\begin{equation*}
\begin{aligned}
   H_\sigma^1(\Omega)=\{v\in H^1(\Omega)\mid v \text{ is axially symmetric}, \di v=0\}.
\end{aligned}
\end{equation*}

The incompressibility condition for a symmetric vector field $u\in  H_\sigma^1(\Omega)$ reads
\begin{equation}
\label{div}
   \div u=\frac{1}{r}\d_r(ru_r)+\frac{1}{r}\d_z(ru_z)=0,\quad r\neq 0.
\end{equation}
If $u$ also satisfies the zero-flux assumption,
then there exists an axially symmetric stream function $\varphi=\varphi(r,z)$ such that  
\begin{equation}
\label{def:phi}
  ru_r=\d_z\varphi,\quad ru_z=-\d_r \varphi.
\end{equation}
We take any fixed scalar function $\eta\in L^\infty(\R;[\rho_*,\rho^*])$ with $\rho_*,\rho^*>0$. Take a density function 
$\rho=\eta(\varphi)$, then the mass conservation law
\begin{equation*}
 \di(\rho u)=\frac{1}{r}\d_r\rho\d_z\varphi-\frac{1}{r}\d_z\rho\d_r\varphi=0   ,\quad r\neq0,
\end{equation*}
holds in the distribution sense.

Our main result is the following.
The proof is similar to the argument in \cite[Subsection 2.1]{HL20}, which is based on Leray's method \cite{Leray} of solving the classical Navier--Stokes equations \eqref{CNS}.
In Section~\ref{subsec:proof}, we sketch the proof and explain the differences to \cite{HL20}.
\begin{theorem}\label{thm:R3}
Let $b\in C(\R;[\mu_\ast,\mu^*])$, $\mu_\ast,\mu^*>0$ and $\eta\in L^\infty(\R;[\rho_*,\rho^*])$, $0<\rho_\ast\leq \rho^\ast$ be  given. Let $\Omega$ be a bounded connected axially symmetric 
domain defined as in \eqref{domian:R3}. Let $u_0\in H^{1/2}_\sigma(\d\Omega)=\{\text{tr}(u) \mid u\in  H^1_\sigma(\R^3)\}$ and $f\in H^{-1}(\Omega;\R^3)$ be axially symmetric functions satisfying \eqref{u0:R3}. Then there exists at least one  axially symmetric weak solution 
\begin{equation*}
(\rho,u)=(\eta(\varphi),\frac{1}{r}\d_z\varphi e_r-\frac{1}{r}\d_r\varphi e_z+u_\theta e_\theta  )\in L^\infty(\Omega)\times H^1_\sigma(\Omega)    
\end{equation*}
of the boundary value problem \eqref{SNS3}--\eqref{u00:R3}, where $\varphi\in H^2(\Omega)$ is an axially symmetric stream function of $u$, in the sense that
$\div(\rho u)=0$ holds in $\Omega$, $u_0=u|_{\partial\Omega}$ is the trace of $u$ on $\partial\Omega$, and the integral identity 
\begin{equation*}
    \frac12\int_{\Omega}\mu Su:Sv\,dx=\int_{\Omega}(\rho u\otimes u):\nabla v\,dx+\int_{\Omega} f\cdot v\,dx
\end{equation*}
holds for all $v\in H_\sigma^1(\Omega)\cap H^1_0(\Omega)$, where$A:B\defeq\sum_{i,j=1}^3 A_{ij}B_{ij}$ for the matrices $A=(A_{ij})_{1\leq i,j\leq 3}$ and $B=(B_{ij})_{1\le i,j\le 3}$.
\end{theorem}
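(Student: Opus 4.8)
The plan is to follow Leray's method as adapted in \cite{HL20}, working with the stream-function formulation so that the density equation is automatically satisfied. First I would reduce to homogeneous boundary data: using the zero-flux condition \eqref{u0:R3} and the Hopf-type extension lemma, construct an axially symmetric divergence-free extension $a\in H^1_\sigma(\Omega)$ of $u_0$, with the crucial property that for every $\delta>0$ one can choose the extension so that $\bigl|\int_\Omega (w\otimes a):\nabla w\,dx\bigr|\le \delta\|\nabla w\|_{L^2}^2$ for all $w\in H^1_\sigma(\Omega)\cap H^1_0(\Omega)$; this is what makes the energy estimate close despite the lack of a sign on the convective term. Writing $u=a+w$, the problem becomes: find $w$ in the space $\mathcal V$ of axially symmetric, divergence-free $H^1_0$ fields solving the weak formulation with the density $\rho=\eta(\varphi)$, where $\varphi$ is the stream function determined by $w$ (and $a$) via $ru_r=\partial_z\varphi$, $ru_z=-\partial_r\varphi$.

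Next I would set up a Galerkin / fixed-point scheme. The essential difficulty, already present in \cite{HL20}, is that the coefficients $\mu=b(\eta(\varphi))$ and $\rho=\eta(\varphi)$ depend on $w$ only through the stream function $\varphi$, and $\eta$, $b$ are merely bounded (or continuous), so the nonlinear map $w\mapsto(\rho,\mu)$ is not weakly continuous on $H^1$ in any obvious way — one must exploit that $\varphi$ depends on $w$ via an elliptic-type relation that gains a derivative, so that $w_n\rightharpoonup w$ in $H^1$ forces $\varphi_n\to\varphi$ strongly in, say, $H^1_{\mathrm{loc}}$ or $C^0$, hence $\eta(\varphi_n)\to\eta(\varphi)$ and $b(\eta(\varphi_n))\to b(\eta(\varphi))$ pointwise a.e. and boundedly, which is enough to pass to the limit in $\int \mu Su:Sv$ and $\int(\rho u\otimes u):\nabla v$. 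Concretely I would: (i) fix $w$ in a ball of $\mathcal V$, solve the linear Stokes-type problem $\frac12\int\mu(\varphi_w)S\tilde u:Sv = \int(\rho(\varphi_w)\tilde u\otimes\tilde u):\nabla v+\int f\cdot v$ — or rather a linearized version with the convective term taken from the previous iterate — obtaining a map $T:w\mapsto \tilde w$; (ii) derive the a priori bound $\mu_*\|Sw\|_{L^2}^2\lesssim \|f\|_{H^{-1}}\|\nabla w\|+\text{(boundary terms controlled by the }\delta\text{-extension)}$, using Korn's inequality on $\mathcal V$ to convert $\|Sw\|_{L^2}$ into $\|\nabla w\|_{L^2}$, which confines $T$ to a fixed ball; (iii) verify that $T$ is compact and continuous on that ball (this is where the gain-of-derivative for $\varphi$ and the Rellich compactness $H^1\hookrightarrow L^4$ enter), and apply the Leray–Schauder fixed point theorem.

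The main obstacle I anticipate is precisely the regularity and compactness of the stream-function map in the axially symmetric setting: one must show $\varphi\in H^2(\Omega)$ with the right dependence on $w$, handle the coordinate singularity at $r=0$ (the factors $1/r$ in \eqref{div} and in the representation of $u$ are only formal there, and all integrals must be interpreted as genuine $\R^3$ integrals, where the Jacobian $r\,dr\,dz\,d\theta$ tames the singularity), and confirm that axial symmetry is preserved under the Galerkin projections and the fixed-point map — i.e. that one can choose a basis of $\mathcal V$ consisting of axially symmetric fields, so that the approximate solutions, and hence the limit, are automatically axially symmetric. A secondary point is checking that the limiting $\varphi$ is indeed a stream function of the limiting $u$ and that $\rho=\eta(\varphi)$ still satisfies $\operatorname{div}(\rho u)=0$ in the distributional sense on all of $\Omega$ including the axis; this follows from the identity $\operatorname{div}(\eta(\varphi)u)=\frac1r(\partial_r\eta(\varphi)\partial_z\varphi-\partial_z\eta(\varphi)\partial_r\varphi)=0$ once $\eta(\varphi)\in H^1$, which in turn needs $\eta$ to be approximated by smooth functions and a chain-rule/density argument. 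I would then recover the pressure $\Pi$ by the usual de Rham / Bogovski\u\i\ argument from the fact that the functional $v\mapsto \frac12\int\mu Su:Sv-\int(\rho u\otimes u):\nabla v-\int f\cdot v$ annihilates all divergence-free test fields, noting that the restriction to axially symmetric test fields suffices by symmetry. Compared to \cite{HL20}, the differences to emphasize are the change of domain geometry (the axially symmetric cylinder \eqref{domian:R3} in place of the planar domain), the use of the cylindrical stream function in place of the Cartesian $\nabla^\perp\Phi$, and the verification that all the functional-analytic ingredients (Korn, Hopf extension, compactness) hold on $H^1_\sigma(\Omega)$.
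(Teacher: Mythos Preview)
Your proposal is correct and follows essentially the same route as the paper: reduce to homogeneous boundary data via a Hopf-type extension controlled by a parameter $\delta$, work in the stream-function formulation so that $\di(\rho u)=0$ is automatic, freeze the coefficients $(\rho,\mu)$ coming from a given iterate, solve the resulting linear problem, and close by a Leray--Schauder fixed-point argument.

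Two small differences are worth noting. First, the paper builds the extension at the level of the stream function, setting $\varphi_0^\delta=\varphi_0\zeta(\cdot\,;\delta)$ for a boundary cut-off $\zeta$ and then defining $u_0^\delta$ from $\varphi_0^\delta$; this guarantees that the extension is itself represented by a stream function, which is convenient when one later writes $\rho=\eta(\varphi_0^\delta+\varphi^\delta)$. Your Hopf-type inequality $\bigl|\int_\Omega (w\otimes a):\nabla w\,dx\bigr|\le \delta\|\nabla w\|_{L^2}^2$ is exactly what that cut-off achieves, so the content is the same. Second, the paper runs the fixed-point argument with an explicit homotopy parameter $\lambda\in[0,1]$ and obtains the a~priori bound by the Leray contradiction argument, while you aim for a direct energy estimate on a fixed ball; the paper also handles the lack of regularity of $b\circ\eta$ by first regularising $\tilde\mu^\delta$ to be Lipschitz and passing to the limit at the end, whereas you rely on the gain of a derivative $w\mapsto\varphi$ to get pointwise convergence of $\eta(\varphi_n)$ and $b(\eta(\varphi_n))$. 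Both mechanisms are valid here and lead to the same conclusion.
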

\begin{remark}
\begin{itemize}
    \item The assumption \eqref{domian:R3} on the domain can be relaxed to any $C^{1,1}$-symmetric domain with respect to the coordinate axis. The $C^{1,1}$-regularity is necessary to extend  $u_0\in H^{1/2}(\d\Omega)$ to a $H^1$-regularity function on $\R^3$.
    
    \item  One can generalise the theorem to axially symmetric multi-connected domains $\d\Omega=\bigcup_{i=1}^k \Gamma_i$ and under the assumption that there is no flux through each component   
\begin{equation}
\label{A:R3}
\int_{\Gamma_i}u_0\cdot \vec{n}\,ds=0,\quad i=1,\ldots,k.
\end{equation} 
    \item Following Leray's approximation method \cite{Leray}, one can generalise the existence result to exterior domains and the whole space $\R^3$.
\end{itemize}
\end{remark}

\subsection{Other symmetric solutions}\label{sys}
In this section, we will show the existence of symmetric solutions of \eqref{SNS3}--\eqref{u00:R3} in cylindrical coordinates, spherical coordinates and   Cartesian coordinates.
The key point is to choose the structure of $\rho$ carefully such that the density equation
\begin{equation}\label{MC}
    \di(\rho u)=0
\end{equation}
holds automatically. We will also formulate explicit examples in  Cartesian coordinates.

More precisely, we consider the following two types of symmetries:
\begin{itemize}
    \item \textit{Symmetry type I:} We assume that $u$ is a three-dimensional vector-valued function depending only on a two-dimensional variable, and hence there exists a stream function $\varphi$ of $u$. We take any bounded positive function $\eta$ and
    the density function of the form
    $$\rho=\eta(\varphi).$$
    This is the case in Theorem \ref{thm:R3}.
    \item \textit{Symmetry type II:} We assume that $u$ is a two-dimensional vector-valued function and vanishes in the spatial direction $e_w$ corresponding to the variable $x_w$.  The density function $\rho$ has the form
    $$\rho=\eta(x_w).$$
\end{itemize}
The mass conservation law  \eqref{MC} holds immediately for the above symmetry types.

We recall the coordinate axis $e_r,e_z$ and $e_\theta$ as in  \eqref{sc} in cylindrical coordinates. We denote the unit standard vectors in 
Cartesian coordinates $(x_1,x_2,x_3)\in\R^3$ by 
$$e_1=\begin{pmatrix}
1\\0\\0
\end{pmatrix},\quad
e_2=\begin{pmatrix}
0\\1\\0
\end{pmatrix}
,\quad
e_3=\begin{pmatrix}
0\\0\\1
\end{pmatrix}$$
and in spherical coordinates $(\tilde r,\alpha,\theta)\in [0,\infty)\times[0,\pi]\times[0,2\pi)$ by
    \begin{equation*}
     e_{\tilde r}=\begin{pmatrix}
       \sin\alpha\cos\theta\\\sin\alpha\sin\theta\\\cos\alpha
     \end{pmatrix},\quad
      e_\alpha=\begin{pmatrix}
       \cos\alpha\cos\theta\\\cos\alpha\sin\theta\\-\sin\alpha
     \end{pmatrix},\quad
      e_\theta=\begin{pmatrix}
       -\sin\theta\\\cos\theta\\0
     \end{pmatrix}.
    \end{equation*}

We have the following existence theorem.
\begin{theorem}\label{THM2}
Let $b\in C(\R;[\mu_\ast,\mu^*])$, $\mu_\ast,\mu^*>0$, and $\eta\in L^\infty(\R;[\rho_*,\rho^*])$, $\rho_\ast,\rho^\ast>0$ be  given. {In cylindrical, Cartesian or spherical coordinates, 
let $\Omega$ be a bounded connected symmetric domain
\begin{align*}
& \Omega=[0,r_1)\times(z_1,z_2)\times[0,2\pi),\\
\text{or}\quad&\Omega=(x_1^1,x_1^2)\times(x_2^1,x_2^1)\times(x_3^1,x_3^2),\\
\text{or}\quad&\Omega=[0,\tilde r_1)\times[0,\pi]\times[0,2\pi).
\end{align*} }
Let $u_0\in H^{1/2}(\d\Omega)$ satisfy the zero-flux assumption \eqref{A:R3} and let $f\in H^{-1}(\Omega;\R^3)$.
\begin{itemize}
    \item \textit{Symmetry type I:} If  $u_0$ and $f$ only depend only on $r$ and $z$, or $x_1$ and $x_2$, or $\tilde r$ and $\alpha$, then there exists at least one solution in the weak sense as in Theorem \ref{thm:R3},
\begin{equation*}
    (\rho,u)\in L^\infty(\Omega)\times H^1(\Omega)
\end{equation*}
of the form 
\begin{align}
&(\rho,u)=(\eta(\varphi),\frac{1}{r}\d_z\varphi e_r-\frac{1}{r}\d_r\varphi e_z+u_\theta e_\theta),\label{CS}\\
&\text{%
$\varphi$ and $u_\theta$ depending only on
$r$ and $z$},\notag\\
\text{or}\quad & (\rho,u)=(\eta(\varphi),\d_2 \varphi e_1-\d_1\varphi e_2+u_3e_3),\label{ccS}\\
&\text{%
$\varphi$ and $u_3$ depending only on
$x_1$ and $x_2$},\notag\\
\text{or}\quad & (\rho,u)=(\eta(\varphi),\frac{1}{{\tilde r}^2 \sin\alpha}\d_\alpha\varphi e_{\tilde r}-\frac{1}{{\tilde r} \sin\alpha}\d_{\tilde r}\varphi e_\alpha+ u_\theta e_\theta), 
\label{SS}\\
&\text{%
$\varphi$ and $u_\theta$ depending only on
$\tilde r$ and $\alpha$}\notag
\end{align}
 of the boundary value problem \eqref{SNS3}--\eqref{u00:R3}.

    \item \textit{Symmetry type II:} If  $u_0$ and $f$ have the form
   \begin{alignat*}{3}
&u_0&&=u_{0,r}e_r+u_{0,z}e_z,\quad &&f=f_re_r+f_ze_z,\\
\text{or}\quad &u_0&&=u_{0,1}e_1+u_{0,2}e_2,\quad &&f=f_1e_1+f_2e_2,\\
\text{or}\quad
&u_0&&=u_{0,\tilde r}e_{\tilde r}+u_{0,\alpha}e_\alpha,\quad &&f=f_{\tilde r}e_{\tilde r}+f_\alpha e_\alpha.
\end{alignat*}
    Then there exists at least one weak solution 
\begin{equation*}
    (\rho,u)\in L^\infty(\Omega)\times H^1(\Omega)
\end{equation*}
of the form 
\begin{align*}
&(\rho,u)=(\eta(\theta),u_re_r+u_ze_z),\\
\text{or}\quad & (\rho,u)=(\eta(x_3),u_1e_1+u_2e_2),\\
\text{or}\quad & (\rho,u)=(\eta(\theta),u_{\tilde r} e_{\tilde r}+u_\alpha e_\alpha)
\end{align*}
 of the boundary value problem \eqref{SNS3}--\eqref{u00:R3}.

\end{itemize}

\end{theorem}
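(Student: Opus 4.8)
The plan is to reduce each of the six assertions to the Leray--Galerkin construction that establishes Theorem~\ref{thm:R3}, performed in the Hilbert space of solenoidal $H^1$-fields adapted to the symmetry at hand; from one case to the next only the coordinate volume elements and the location of the coordinate singularities change. For each symmetry type and each coordinate system I would first check two purely algebraic facts: (i) that a divergence-free field of the prescribed symmetry admits the stream-function representation (type~I) or the reduced representation (type~II) written in the statement, and (ii) that, for the chosen form of $\rho$, the continuity equation $\di(\rho u)=0$ holds automatically. Granting (i)--(ii), the analytic work --- lifting the boundary datum, the finite-dimensional problems, the energy estimate, and the passage to the limit --- goes through as for Theorem~\ref{thm:R3}.

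\emph{Symmetry type~I.} For a field of the form \eqref{CS}, \eqref{ccS} or \eqref{SS}, writing out $\di u=0$ in cylindrical, Cartesian or spherical coordinates and clearing the coordinate volume element turns the constraint into a two-dimensional divergence in the $(r,z)$-, $(x_1,x_2)$- or $(\tilde r,\alpha)$-plane; since the relevant planar section of $\Omega$ is simply connected, a stream function $\varphi$ of the displayed form exists, and it lies in $H^2$ whenever $u\in H^1$. With $\rho=\eta(\varphi)$ one then verifies
\[
\di(\rho u)=\rho\,\di u+u\cdot\nabla\eta(\varphi)=0,
\]
because $\di u=0$ while $\nabla\eta(\varphi)$ is parallel to $\nabla\varphi$ and the remaining velocity component (the $e_\theta$-, $e_3$- or $e_\theta$-component) is tangent to the level sets of $\varphi$; equivalently $\eta(\varphi)$ is transported by the flow. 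I would then run the Galerkin scheme in the space of solenoidal $H^1_0(\Omega)$-fields of the given symmetry, lift $u_0$ to a divergence-free symmetric $H^1$-field using the no-flux condition \eqref{A:R3}, and solve the finite-dimensional problems with density $\rho^n=\eta(\varphi^n)$, where $\varphi^n$ is the stream function of the $n$-th Galerkin field (regularising $\eta$ to a continuous function beforehand, so that the finite-dimensional maps are continuous and Brouwer's theorem applies). By $\di(\rho^n u^n)=0$ and $u^n|_{\d\Omega}=u_0$, the cancellation $\int_\Omega(\rho^n u^n\otimes u^n):\nabla u^n\,dx$ reduces to a boundary integral over $\d\Omega$ which, thanks to $\rho_*\le\rho^n\le\rho^*$, is bounded by a constant depending only on $u_0$, so the a priori estimate closes as in Leray's method once the lift of $u_0$ is chosen small in a boundary layer. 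Passing to the limit, boundedness of $u^n$ in $H^1$ makes $\varphi^n$ bounded in $H^2$, hence convergent strongly (in particular uniformly) after extracting a subsequence; since the level set $\{\varphi=c\}$ is Lebesgue-null for a.e.\ $c$, the limit of $\rho^n$ is identified with $\eta(\varphi)$ for the original bounded $\eta$, exactly as in \cite[Subsection~2.1]{HL20} and \cite{Santos1}. In the spherical case one additionally carries the weighted estimates near $\tilde r=0$ and near $\alpha\in\{0,\pi\}$ through the argument, just as the weight near $r=0$ is carried through in Theorem~\ref{thm:R3}.

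\emph{Symmetry type~II.} Here $u$ has no component along the ignorable direction $e_w$ (respectively $e_\theta$, $e_3$, $e_\theta$) and is independent of $x_w$ (respectively $\theta$, $x_3$, $\theta$), while $\rho=\eta(x_w)$, so $\nabla\eta(x_w)$ is parallel to $e_w$; hence $u\cdot\nabla\rho=0$ and $\di(\rho u)=\rho\,\di u+u\cdot\nabla\rho=0$, which one checks directly against test functions using $\di u=0$ in the reduced variables (any jump of $\eta(x_w)$ across the branch set $\{\theta=0\}$ points along $e_\theta\perp u$ and contributes nothing). Since $u$ and the symmetry-class test fields are independent of $x_w$, while $\mu=b(\eta(x_w))$ and $\rho=\eta(x_w)$ depend only on $x_w$ and the data may be taken of the same reduced form, integrating the weak formulation in $x_w$ replaces $\mu$, $\rho$ and the data by their $x_w$-averages; in particular $\mu$ and $\rho$ become the constants
\[
\widetilde\mu=\tfrac1{|I_w|}\int_{I_w}b(\eta(x_w))\,dx_w\in[\mu_*,\mu^*],\qquad \widetilde\rho=\tfrac1{|I_w|}\int_{I_w}\eta(x_w)\,dx_w\in[\rho_*,\rho^*],
\]
$I_w$ denoting the range of $x_w$. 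The problem thus collapses to the classical constant-coefficient stationary Navier--Stokes system in the symmetry class at hand, solvable by Leray's method with the lift of $u_0$ and the energy estimate obtained as above and the coordinate singularities treated as in type~I; undoing the reduction produces a weak solution of \eqref{SNS3}--\eqref{u00:R3} of the stated form.

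The main obstacle is the one already faced in Theorem~\ref{thm:R3}: closing the a priori estimate for arbitrary --- not necessarily small --- boundary data $u_0$ via Leray's boundary-layer argument in the presence of the variable density, and identifying the weak-$*$ limit of $\rho^n$ with $\eta(\varphi)$ when $\eta$ is merely bounded. The coordinate weights in the cylindrical and spherical settings, and the several algebraic verifications of the stream-function formulas and of $\di(\rho u)=0$, add only bookkeeping.
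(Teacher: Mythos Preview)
Your treatment of symmetry type~I is essentially correct and matches the paper: one verifies the stream-function representation and the identity $\di(\rho u)=0$ in each coordinate system, then reruns the Leray-type argument of Theorem~\ref{thm:R3} in the appropriate symmetric subspace. The paper's own proof does exactly this, writing out only the mass-conservation checks and deferring the analytic work to Theorem~\ref{thm:R3}.

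Your argument for symmetry type~II, however, contains a genuine gap. You assume that $u$ (and the data) are independent of the ignorable variable $x_w$ and then average in $x_w$ to reduce to a constant-coefficient two-dimensional problem. But the theorem makes no such assumption---the explicit example following the theorem has $u_1,u_2$ depending on $x_3$---and, more importantly, the reduction does not invert. If $u=u(x_1,x_2)$ solves the averaged problem with constants $\widetilde\mu,\widetilde\rho$, the strong form of the momentum equation for the pair $(\eta(x_3),u)$ in the $e_1,e_2$ components reads
\[
\rho(x_3)\,(u\cdot\nabla')u-\mu(x_3)\,\Delta' u+\nabla'\Pi=f,
\]
while the $e_3$ component forces $\partial_3\Pi=0$. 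Since $\rho(x_3),\mu(x_3)$ vary with $x_3$ but $u,\Pi,f$ do not, this is an overdetermined system that the averaged equation with $\widetilde\mu,\widetilde\rho$ does not imply; equivalently, your $u$ satisfies the weak form only against $x_w$-independent test functions, and the problem is not translation-invariant in $x_w$, so this does not upgrade. The paper's route is both simpler and correct: in type~II the functions $\rho=\eta(x_w)$ and $\mu=b(\eta(x_w))$ are \emph{prescribed}, independent of the unknown $u$, so there is no density--velocity coupling to linearize around. One applies the Leray--Schauder principle directly to the momentum equation with these fixed variable coefficients (Step~2 of Section~\ref{subsec:proof} with no auxiliary $\tilde u$), obtaining a solution $u$ that in general depends on all three variables but has no $e_w$-component; see Remark~\ref{rem:R3}.
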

\begin{proof}
For the symmetric solutions of symmetry type~I, the cylindrical case was shown in Theorem \ref{thm:R3}, analogously we can show the Cartesian and spherical cases. The proof of symmetry type~II is similar to type~I, see the solvability in Remark \ref{rem:R3}.
We omit the detailed proof here and only verify the mass conservation law. 

In cylindrical, Cartesian, and spherical coordinates, the gradient operators can be written as
\begin{align*}
\nabla&=e_r\d_r+e_z\d_z+\frac{e_\theta}{r}\d_\theta,\\
\nabla&=e_1\d_1+e_2\d_2+e_3\d_3,\\
\nabla & =  e_{\tilde r}\,\d_{\tilde r}+\frac{e_\alpha\,}{\tilde r}\,\d_\alpha+\frac{e_\theta}{\tilde r \sin\alpha}\d_\theta.
\end{align*}
For the solutions of symmetry type II, the mass conservation law holds immediately since
\begin{equation*}
    \div(\rho u)=\nabla \rho\cdot u.
\end{equation*}
Concerning the solutions of symmetry type I, the case of cylindrical coordinates was shown in Section \ref{R3main}.  In Cartesian and spherical coordinates we consider 
\begin{equation*}
    u=u_1e_1+u_2e_2+u_3e_3,\quad u=u_{\tilde r} e_{\tilde r}+u_\alpha e_\alpha+u_\theta e_\theta,
\end{equation*}
where $u_1,u_2$ and $u_3$ depend only on $x_1$ and $x_2$, and $u_{\tilde r},u_\alpha$ and $u_\theta$ depend only on $\tilde r$ and $\alpha$. Then the incompressibility conditions can be written as
\begin{equation}\label{div3}
\begin{aligned}
\div u&=\d_1u_1+\d_2u_2=0, \\
\div u&=\frac{1}{{\tilde r}^2}\d_{\tilde r}({\tilde r}^2u_{\tilde r})+\frac{1}{{\tilde r} \sin\alpha}\d_\alpha(\sin\alpha u_\alpha)=0.
\end{aligned}
\end{equation}
If $u$ satisfies \eqref{div3} and the zero-flux assumption \eqref{u00:R3}, then there exists stream functions $\varphi=\varphi(x_1,x_2)$ and
$\varphi=\varphi(\tilde r,\alpha)$ such that  
\begin{equation*}
u_1=\d_2\varphi ,\quad u_2=-\d_{1}\varphi
\end{equation*}
and
\begin{equation*}
u_{\tilde r}=\frac{1}{{\tilde r}^2 \sin\alpha}\d_\alpha\varphi ,\quad u_\alpha=-\frac{1}{{\tilde r} \sin\alpha}\d_{\tilde r}\varphi.    
\end{equation*}
Then the pairs 
\begin{equation*}
(\rho,u)=(\eta(\varphi),\d_2\varphi e_{1}-\d_{1}\varphi e_2),
\end{equation*}
and
\begin{equation*}
(\rho,u)=(\eta(\varphi),\frac{1}{{\tilde r}^2 \sin\alpha}\d_\alpha\varphi e_{\tilde r}-\frac{1}{{\tilde r} \sin\alpha}\d_{\tilde r}\varphi e_\alpha+ u_\theta e_\theta)    
\end{equation*}
 satisfy the mass conservation law, since
 \begin{equation*}
\di(\rho u)=\d_1\rho\d_2\varphi-\d_2\rho\d_1\varphi=0
\end{equation*}
and
\[\di(\rho u)=\d_{\tilde r}\rho \frac{1}{{\tilde r}^2\sin\alpha}\d_\alpha\varphi-\frac{1}{{\tilde r}}\d_\alpha\rho \frac{1}{{\tilde r}\sin\alpha}\d_{\tilde r}\varphi=0.\qedhere\]
\end{proof}
\begin{remark} The stream functions \eqref{CS} and \eqref{SS} are called the Stokes stream functions.
{The above existence results also hold for solutions of the symmetry types I and II with respect to other axis.} We write down the rest of the solutions of symmetry types I and II in the cylindrical and spherical  coordinates:
\begin{align*}
&(\rho,u)=(\eta(\varphi),u_re_r-\frac{1}{r}\d_\theta\varphi e_z+\d_z\varphi e_\theta),\\
&\text{$\varphi$ and $u_r$ depending only on  $z$ and $\theta$},\\
\text{or}\quad & (\rho,u)=(\eta(\varphi),\frac{1}{r}\d_\theta\varphi e_r+u_ze_z-\d_r\varphi e_\theta),  \\
&\text{$\varphi$ and $u_z$ depending only on  $r$ and $\theta$},\\
\intertext{and}
&(\rho,u)=(\eta(\varphi),u_{\tilde r}e_{\tilde r}+\frac{1}{\sin\alpha}\d_\theta\varphi e_\alpha-\d_{\alpha}\varphi e_\theta),\\
&\text{$\varphi$ and $u_{\tilde r}$ depending only on  $\alpha$ and $\theta$},\\
\text{or}\quad & (\rho,u)=(\eta(\varphi),\frac{1}{{\tilde r}^2 \sin\alpha}\d_\theta\varphi e_{\tilde r}+u_\alpha e_\alpha-\frac{1}{\tilde r}\d_{\tilde r}\varphi e_\theta),  \\
&\text{$\varphi$ and $u_{\alpha}$ depending only on  $\tilde r$ and $\theta$}.
\end{align*}

\end{remark}

In the following, we formulate explicit solutions of Navier--Stokes equations \eqref{SNS3} of symmetry types I and II in Cartesian coordinates. Similar explicit solutions to the two-dimensional stationary Navier--Stokes equations \eqref{SNS3} were given in \cite{HL20}.
\begin{itemize}
    \item 
\textit{Symmetry type I:}  
We consider $\rho$ and $u$ depending only on $x_1$ and $x_2$.
Then the system \eqref{SNS3} with $f=0$ is
\begin{equation*}
\label{CC}
\begin{aligned}
\begin{pmatrix}
u_1\d_1 u_1+u_2\d_2u_1\\u_1\d_1 u_2+u_2\d_2u_2\\u_1\d_1 u_3+u_2\d_2u_3
 \end{pmatrix}&-
 \begin{pmatrix}
 2\d_1(\mu \d_1 u_1)+\d_2(\mu(\d_1u_2+\d_2u_1))\\ \d_1(\mu(\d_1u_2+\d_2u_1))+2\d_2(\mu \d_2 u_2)\\
 \d_1(\mu\d_1 u_3)+\d_2(\mu\d_2u_3)
 \end{pmatrix} \\& + \begin{pmatrix}
 \d_1\Pi\\\d_2\Pi\\\d_3\Pi
 \end{pmatrix}=0.
\end{aligned}
\end{equation*}

We also consider $\Pi=\Pi(x_1,x_2)$. Then $(\rho,u_1e_1+u_2e_2)$ satisfies the two-dimensional stationary Navier--Stokes equations \eqref{SNS3}.  
Based on the radial solutions of the two-dimensional equations \eqref{SNS3} given in \cite{HL20}, we obtain a solution that satisfies
\begin{gather*}
 \rho=\rho(r),\quad (r,\theta)=(\sqrt{x_1^2+x_2^2},\arctan(x_2/x_1)),\\
  u_1=(rg\sin\theta),\quad u_2=-(rg\sin\theta),\quad \d_r(\mu\d_r u_3)=0,
\end{gather*}
where $g$ satisfies the ODE
\begin{equation*}
    \d_r( \mu r^3 \d_r g) = -Cr,\quad C\in\R.
\end{equation*}
The corresponding stream function $\Phi$ satisfies
\begin{equation*}
    \d_{rr}(\mu r^3\d_r(\frac1r\d_r\Phi))=-C.
\end{equation*}

\item\textit{Symmetry type II:} We assume that
\begin{equation*}
    (\rho,u)=(\rho(x_3),u_1(x_3)e_1+u_2(x_3)e_2),
\end{equation*}
then the equation  \eqref{SNS3} with $f=0$ reads as
\begin{equation*}
 \begin{pmatrix}
 \d_3(\mu \d_3 u_1)\\ \d_3(\mu \d_3 u_2)\\0
 \end{pmatrix}   = \begin{pmatrix}
 \d_1\Pi\\\d_2\Pi\\\d_3\Pi
 \end{pmatrix} .
\end{equation*}
The equation $\d_3\Pi=0$ implies that $\Pi$ is independent of $x_3$. It follows that there exists constants $C_1,\,C_2\in \R$ such that  
\begin{equation*}
    \d_1\Pi=\d_3(\mu \d_3 u_1)=C_1,\quad  \d_2\Pi=\d_3(\mu \d_3 u_2)=C_2.
\end{equation*}
\end{itemize}

\section{Proof of existence}\label{subsec:proof}

We sketch the proof of Theorem~\ref{thm:R3}, leaving out some details that are already contained in \cite{HL20}.

 Let $u\in H^1_\sigma(\Omega)$ be an axially symmetric divergence-free vector field. We define the projection domain $\tilde\Omega:=(0,r_1)\times (z_1,z_2)$. Notice that we have $(-ru_z,ru_r)\in H^1(\tilde\Omega)$, and there exists 
a stream function $\varphi=\varphi(r,z)\in H^2(\tilde \Omega)$ such that 
\begin{equation}
\label{def:phi:2}
  ru_r=\d_z\varphi,\quad ru_z=-\d_r \varphi.
\end{equation}
 Let $u|_{\d\Omega}=u_0=u_{0,r}e_r+u_{0,\theta}e_\theta+u_{0,z}e_z$ denote the trace of $u$. We define $\tilde\Gamma:=\{r_1\}\times[z_1,z_2]\subset \d\tilde\Omega$ with the unit normal and tangential vectors $\tilde n=(1,0)$ and $\tilde \tau=(0,1)$. Then $\varphi$ satisfies the boundary conditions
\begin{align*}
    \frac{\d\varphi}{\d \tilde n}\Big|_{\tilde\Gamma}=-ru_{0,z}\quad\text{and}\quad\frac{\d\varphi}{\d \tilde\tau}\Big|_{\tilde\Gamma}=ru_{0,r}.
\end{align*}
We parameterize $\tilde\Gamma$ by $\gamma(s):[0,1]\to \tilde\gamma$ such that $ \gamma'(s)=\tilde\tau \gamma(s)$. With a fixed constant $C_0$, we have
\begin{equation}  
\label{BC:phi}
\begin{aligned}
&\varphi(\gamma(s))|_{\tilde\Gamma}=\Phi_0(\gamma(s)):=\int_0^sru_{0,r}(\gamma(s))+C_0\\
&\text{and}\quad\frac{\d\varphi(\gamma(s))}{\d \tilde n}\Big|_{\tilde\gamma}=\Phi_1(\gamma(s)):=-r u_{0,z}(\gamma(s)).
    \end{aligned}
\end{equation}
Throughout this paper we fix $C_0=0$, so that the stream functions $\varphi$ are uniquely determined by \eqref{def:phi:2}-\eqref{BC:phi}.
The stream function $\varphi$ can also be seen as an axially symmetric function defined on $\Omega$. The fact $\sum_{i,j=1}^3|\d_{ij} \varphi |^2\lesssim |\d_{rr}\varphi|^2+|\d_{rz}\varphi|^2+|\d_{zz}\varphi|^2$ ensures that $\varphi\in H^2(\Omega)$. We define $\Gamma=\tilde\Gamma\times[0,2\pi)\subset\d\Omega$, then the boundary value \eqref{BC:phi} implies that
\begin{equation}  
\varphi|_{\Gamma}=\Phi_0\quad\text{and}\quad\frac{\d\varphi}{\d  n}|_{\Gamma}=\Phi_1,
\end{equation}
where the outer normal vector $n$ of $\Gamma$ is defined as $n=(\cos\theta,\sin\theta,0)$.


We define the functional space
\begin{equation*}
   H(\Omega)= H_\sigma^1(\Omega)\cap H^1_0(\Omega).
\end{equation*}
Then for any $u\in H (\Omega)$, there exists a unique stream function $\varphi(r,\theta)\in H^2_{\Gamma}(\Omega)$ such that
\begin{equation*}
u=\frac{1}{r}\d_z\varphi e_r-\frac{1}{r}\d_r\varphi e_z+u_\theta e_\theta  \quad r>0.    
\end{equation*}
In the above, the space $H^2_{\Gamma}(\Omega)$ is defined as $H^2_{\Gamma}(\Omega):=\{\varphi\in H^2(\Omega)\mid \text{$\varphi$ axially symetric, and }\varphi|_{\Gamma}= \frac{\d\varphi}{\d  n}|_{\Gamma}=0\}$.

{We prove Theorem \ref{thm:R3} in the following three steps.}
\subsection*{Step 1: Boundary condition}
We extend the axially symmetric boundary value $u_0$ to $\R^3$ (still denoted by $u_0\in H_\sigma^1(\R^3)$). Then there exists a axially symmetric stream function $\varphi_0\in H^2(\R^3)$ determined by \eqref{BC:phi} such that
\begin{equation*}
    u_0=\frac{1}{r} \d_z\varphi_0 e_r-\frac{1}{r}\d_r\varphi_0e_z+u_{0,\theta}e_\theta,\quad r>0.
\end{equation*}
Recall the axially symmetric domain $\Omega=[0,r_1)\times(z_1,z_2)\times[0,2\pi)$. We define a smooth cut-off function $\zeta(r,z;\delta)$ such that
\begin{equation*}
\zeta(r,z;\delta)=
\begin{cases*}
1 & if $\min\{|r-r_1|,|z-z_i|\}\le\frac{\delta}{2}$,\\
0 & if $\min\{|r-r_1|,|z-z_i|\}\ge \delta$
\end{cases*}
\end{equation*}    
for $i=1,2$ and 
    $$|\zeta(r,z;\delta)|\le C,\quad|\nabla \zeta(r,z;\delta)|\le C\delta^{-1}$$
for some constant $C$. We write
\begin{equation*}
\varphi_0^\delta(r,z)=\varphi_0(r,z)\zeta(r,z;\delta),\quad u_0^\delta=\frac{1}{r} \d_z\varphi_0^\delta e_r-\frac{1}{r}\d_r\varphi_0^\delta e_z+u_{0,\theta}e_\theta.
\end{equation*}
For a fixed $\delta>0$, we only need to find weak solutions
     \begin{equation*}
     u^\delta= u-u_0^\delta\in H(\Omega)
     \end{equation*}
satisfying
\begin{equation}
\label{ueq1}
\begin{split}
\frac12\int_{\Omega}\mu^\delta
Su^\delta:Sv\,dx
=&\int_{\Omega}\rho^\delta(u_0^\delta+u^\delta)
\otimes(u_0^\delta+u^\delta)):\nabla v\,dx
+\int_{\Omega}f\cdot v\,dx\\
&-\frac12\int_{\Omega}\mu^\delta Su_0^\delta:Sv\,dx
\quad \text{for all } v\in H(\Omega),
\end{split}
\end{equation} 
where $\rho^\delta=\eta(\varphi^\delta_0+\varphi^\delta)$, $\mu^\delta=b(\rho^\delta)$ and $\varphi^\delta\in H^2_{\Gamma}(\Omega)$ is a stream function of $u^\delta$.

\subsection*{Step 2: Linearised system}

We fix an axially symmetric $\tilde u\in H(\Omega)$ and the corresponding unique stream function $\tilde\varphi\in H^2_{\Gamma}(\Omega)$. Correspondingly, we define the density function and viscosity coefficient as
    \begin{equation*}
\tilde\rho^\delta=\eta(\tilde \varphi+\varphi_0^\delta),\quad \tilde\mu^\delta=b(\tilde\rho^\delta).
\end{equation*}
We consider the following linearised problem with a parameter $\lambda\in[0,1]$:
\begin{equation}\label{ueq2}
\begin{split}
\frac12\int_{\Omega}\tilde\mu^{\delta}_{\lambda} &S
u:Sv\,dx
=\lambda\int_{\Omega}\tilde\rho^{\delta}_{\lambda}(\lambda u_0^{\delta}+\tilde u)
\otimes(\lambda u_0^{\delta}+u):\nabla v\,dx\\
&+\lambda\int_{\Omega}f\cdot v\,dx
-\frac{\lambda}{2}\int_{\Omega}\tilde\mu^{\delta}_{\lambda} Su_0^{\delta}:Sv\,dx
\quad \text{for all } v\in H(\Omega),
\end{split}
\end{equation}
where $\tilde\rho^{\delta}_{\lambda}=\eta(\tilde \varphi+\lambda\varphi_0^\delta)$ and
$\tilde\mu^{\delta}=b(\tilde\rho^{\delta}_{\lambda})$. Notice that if $\lambda=1$ and $u_1^\delta=\tilde u$, then $u_1^\delta$ satisfies the weak formulation \eqref{ueq1}.

Notice that the left-hand side of \eqref{ueq2} defines an inner product $\langle\cdot,\cdot\rangle$ on $H(\Omega)$ through
\begin{equation*}
\label{product}
\langle u, v\rangle:=\frac12 \int_{\Omega}\tilde \mu^{\delta}_\lambda Su:Sv\,dx.
\end{equation*}
Since by definition $\mu_*\le \tilde \mu^\delta_{\lambda}\le \mu^* $, one has
\begin{align*}
\sqrt{\mu_\ast/2}\|\nabla u\|_{L^2(\Omega)}&\leq \langle u,u\rangle^{\frac12}\leq \sqrt{\mu^\ast/2}\|\nabla u\|_{L^2(\Omega)},
\end{align*} 
which implies the equivalence of $\langle\cdot,\cdot\rangle^{\frac12}$ and $\|\cdot\|_{H^1(\Omega)}$ on $H(\Omega)$.
Furthermore, one can show that the right-hand side of \eqref{ueq2} defines a bounded linear functional. By the Leray--Schauder's principle, there exists a unique weak solution $u^\delta_\lambda \in H(\Omega)$ of the linear problem \eqref{ueq2}. 

\subsection*{Step 3: Nonlinear problem}

We define the map
\begin{equation}
\label{map:T}
T^{\delta}:[0,1]\times H(\Omega)\ni (\lambda,\tilde u)\mapsto u^{\delta}_\lambda\in H(\Omega).
\end{equation}
One can show the existence of the fixed point $u^\delta_1=T(1,u^\delta_1)$ using the Leray--Schauder principle.
A uniform bound for $\|u^{\delta}_{\lambda}\|_{H^1}$ ($\|\varphi^{\delta}_{\lambda}\|_{H^2}\le C$) with $u^\delta_\lambda=T(\lambda,u^\delta_\lambda)$ can be shown by a contraction argument as in \cite{Leray}. Notice that the fixed point  $u^{\delta}=u_1^{\delta}\in H(\Omega)$ satisfies \eqref{ueq1} and the pair $(\rho^\delta,u^\delta_0+u^\delta)\in L^\infty(\Omega)\times H^1_\delta(\Omega)$ is a weak solution of \eqref{SNS3}. 

Higher regularity of the quantities in \eqref{ueq2} is required to show the existence of fixed points of $T^\delta$.
To overcome this, one can first regularise $u_0^\delta$, $\varphi_0^\delta$, $\eta$ and $b$ in \eqref{ueq2} by convolution with a sequence of mollifiers $\sigma^\varepsilon$. Then we repeat the linear and nonlinear arguments, and pass to the limit by letting $\varepsilon\to0$ to obtain \eqref{ueq1}.

\begin{remark}\label{rem:R3}
We can follow the above proof to show the solvability under the assumption of symmetry type II in Subsection \ref{sys}. In this case,  $\rho$ and $\mu$ are fixed and independent of $u$. As a consequence, we do not need to fix $\tilde u$ as in Step 2. The solvability can be obtained by applying the Leray--Schauder principle.
\end{remark}

\section*{Acknowledgement}
The author would like to thank her Ph.D. supervisor JProf. Xian Liao for many helpful discussions and suggestions. The author is pleased to acknowledge financial support by the Deutsche Forschungsgemeinschaft (DFG, German Research Foundation) -- Project-ID 317210226 -- SFB 1283.
\printbibliography

@article {Finn,
    AUTHOR = {Finn, Robert},
     TITLE = {On steady-state solutions of the {N}avier-{S}tokes partial
              differential equations},
   JOURNAL = {Arch. Rational Mech. Anal.},
    VOLUME = {3},
      YEAR = {1959},
     PAGES = {381--396},
}

@article {Frolov,
    AUTHOR = {Frolov, N. N.},
     TITLE = {On the solvability of a boundary value problem of the motion
              of a nonhomogeneous fluid},
   JOURNAL = {Mat. Zametki},
    VOLUME = {53},
      YEAR = {1993},
     PAGES = {130--140},
}

@book {Galdi,
    AUTHOR = {Galdi, G. P.},
     TITLE = {An introduction to the mathematical theory of the
              {N}avier--{S}tokes equations},
    SERIES = {Springer Monographs in Mathematics},
   EDITION = {2},
 PUBLISHER = {Springer, New York},
      YEAR = {2011},
     PAGES = {xiv+1018},
}

@article {Santos1,
    AUTHOR = {Santos, Marcelo M.},
     TITLE = {Stationary solution of the {N}avier--{S}tokes equations in a
              2{D} bounded domain for incompressible flow with discontinuous
              density},
   JOURNAL = {Z. Angew. Math. Phys.},
    VOLUME = {53},
      YEAR = {2002},
    NUMBER = {4},
     PAGES = {661--675},
}

@book {Lions,
    AUTHOR = {Lions, Pierre-Louis},
     TITLE = {Mathematical topics in fluid mechanics},
    SERIES = {Oxford Lecture Series in Mathematics and its Applications},
    VOLUME = {1},
    NUMBER = {3},
 PUBLISHER = {The Clarendon Press, Oxford University Press, New York},
      YEAR = {1996},
     PAGES = {xiv+237},
}

@book {Leray,
    AUTHOR = {Leray, Jean},
     TITLE = {\'{E}tude de diverses \'{e}quations int\'{e}grales non lin\'{e}aires et de
              quelques probl\`emes que pose l'hydrodynamique},
 PUBLISHER = {J. Math. Pures Appl.},
  VOLUME = {9},
      YEAR = {1933},
     PAGES = {1-82},
}

@article {AMI91,
    AUTHOR = {Amick, Charles J.},
     TITLE = {On the asymptotic form of {N}avier-{S}tokes flow past a body
              in the plane},
   JOURNAL = {J. Differential Equations},
      VOLUME = {91},
      YEAR = {1991},
    NUMBER = {1},
     PAGES = {149--167},

}

@article {HL20,
    AUTHOR = {Zihui He and Xian Liao},
     TITLE = {Solvability of the two-dimensional stationary incompressible inhomogeneous Navier--Stokes equations with variable viscosity coefficient},
   JOURNAL = {Commun. Contemp. Math.},
    VOLUME = {},
      YEAR = {2023},
    NUMBER = {},
     PAGES = {},

}

\end{document}